\newtheorem{theorem}{Theorem}[section]
\newtheorem{lemma}[theorem]{Lemma}
\newtheorem{proposition}[theorem]{Proposition}
\newtheorem{corollary}[theorem]{Corollary}
\theoremstyle{definition}
\newtheorem{definition}[theorem]{Definition}
\newtheorem{remark}[theorem]{Remark}
\numberwithin{equation}{section}
\author{Shohei Satake}
\address{
Graduate School of System Informatics, Kobe University \\
Rokkodai 1-1, Nada, Kobe, 657-8501, JAPAN}
\email{155x601x@stu.kobe-u.ac.jp}
\thanks{The author is supported by Grant-in-Aid for JSPS Fellows 18J11282 of the Japan Society for the Promotion of Science.}
\keywords {Eigenvelues, expander-mixing lemma, the quasi-random property, regular tournaments}
\subjclass[2010]{05C20}
\begin{document}

\title[On Explicit Random-like Tournaments]
{On Explicit Random-like Tournaments}

\maketitle

\begin{abstract}
We give a new theorem describing a relation between the quasi-random property of regular tournaments and their spectra.
This provides many solutions to a constructing problem mentioned by Erd\H{o}s and Moon (1965) and Spencer (1985).
\end{abstract}

\section{Introduction}
\label{intro}
A {\it tournament} is an oriented complete graph.
{\it Random tournaments} $\mathcal{T}_n$ with $n$ vertices are obtained by choosing a direction of each edge of a complete graph with $n$ vertices with probability $1/2$, independently.
We say that random tournaments {\it asymptotically almost surely} ({\it a.a.s.}) satisfy a property $\mathcal{P}$ if 
the probability of the event that tournaments satisfy $\mathcal{P}$ tends to $1$ when $n$ goes to infinity.
In graph theory, there have been many problems focusing on deterministic tournaments satisfying properties which random tournaments a.a.s satisfy; see e.g. \cite{AS16}, \cite{B09}, \cite{CG91}, \cite{CR17}, \cite{KS13}.
\par In this paper, as such a property, we mainly focus on the {\it quasi-random property} proposed by Chung-Graham~\cite{CG91}. 
Our main result is to give a new theorem describing a relation between the quasi-random property and spectra of regular tournaments.
This result also provides many solutions to a problem, proposed by Erd\H{o}s-Moon~\cite{EM65} and Spencer~\cite{S85} (see also \cite[Section 9.1]{AS16}),
on explicit constructions of tournaments with a small number of consistent edges. 
It is well-known that Paley tournaments have the quasi-random property (e.g. \cite{CG91}). Moreover, by proving that Paley tournaments have a property stronger than the quasi-random property, Alon-Spencer~\cite{AS16} showed that they provide solutions to the problem by Erd\H{o}s, Moon and Spencer.
We note that the proof in \cite{AS16} contains a part (Lemma 9.1.2 in \cite{AS16}) depending on the definition of Paley tournaments. 
Remarkably, we generalize their discussion to all regular tournaments by using a digraph-version of the {\it expander-mixing lemma} proved by Vu~\cite{V08}.
\par The rest of this paper is organized as follows.
In Section~\ref{sect:Rank}, we recap the quasi-random property and introduce some related known facts.
In Section~\ref{sect:Main}, we introduce our main result and give its proof. 
In Section~\ref{sect:Const}, we provide some examples of regular tournaments satisfying the quasi-random property which are also solutions to the problem by Erd\H{o}s, Moon and Spencer. 
At last, in Section~\ref{sec:Shutte}, we discuss another random-like property defined as an adjacency property.

\section{The quasi-random property and related facts}
\label{sect:Rank}
In this section, we review the quasi-random property and some related known facts.
For a digraph $D$, let $V(D)$ and $E(D)$ be the vertex and the edge set of $D$, respectively. 
For two distinct vertices $x$ and $y$, let the ordered pair $(x, y)$ denote the edge directed from $x$ to $y$.
\par First, we give the definition of the quasi-random property of tournaments which was formulated by Chung-Graham~\cite{CG91}.
\begin{definition}[The quasi-random property, \cite{CG91}]
\label{def:quasi}
Let $T$ be a tournament with $n$ vertices.
Let $\sigma$ be a bijection from $V(T)$ to $\{1, 2, \ldots, n\}$.
An edge $(x, y)$ of $T$ is called {\it consistent} with $\sigma$ if $\sigma(x)<\sigma(y)$. 
Let $C(T, \sigma)$ be the number of consistent edges with $\sigma$ and $C(T)=\max_{\sigma}C(T, \sigma)$.
Then, $T$ has the {\it quasi-random property} if $T$ satisfies 
\begin{equation}
\label{eq:quasi}
C(T) \leq (1+o(1))\frac{n^2}{4}.
\end{equation}
\end{definition}
Surprisingly, Chung-Graham~\cite{CG91} gave some other properties which are seemingly unrelated, but actually equivalent with (\ref{eq:quasi}). 
The interested reader is referred to \cite{CG91}. 
\par Consistent edges of tournaments was originally investigated by Erd\H{o}s-Moon~\cite{EM65}.
Their work was from paired comparisons (e.g. \cite{KS40}). 
It is reasonable to find suitable rankings, that is, bijections with many consistent edges.
First observe that for every tournament $T$ with $n$ vertices,
\begin{equation}
   \frac{1}{2}\binom{n}{2} \leq C(T)\leq \binom{n}{2}. 
\end{equation}
The lower bound of $C(T)$ is obtained by the following simple fact$\colon$
\begin{equation}
\label{eq:plus}
C(T, \sigma)+C(T, \sigma')=\binom{n}{2},
\end{equation}
where $\sigma'$ is the reversed ranking of $\sigma$ which is defined as $\sigma'(v)=n+1-\sigma(v)$ for each $v \in V(T)$.
For the upper bound of $C(T)$, the equality holds if and only if $T$ is a transitive tournament.
On the other hand, it is non-trivial to check the tightness of the lower bound of $C(T)$.
In \cite{EM65}, it was proved that there exist tournaments $T$ such that $C(T)\leq (1+o(1))\binom{n}{2}/2$ by a probabilistic argument.
Moreover Spencer~\cite{S71}, \cite{S80} and de la Vega~\cite{d83} proved that random tournament $\mathcal{T}_n$ a.a.s satisfies the following property which is stronger than the quasi-random property$\colon$
\begin{equation}
\label{eq:stongquasi}
C(\mathcal{T}_n) \leq \frac{1}{2}\binom{n}{2}+O(n^{\frac{3}{2}}).
\end{equation}
\par Erd\H{o}s-Moon~\cite{EM65} and Spencer~\cite{S85} mentioned the problem on explicit constructions of tournaments $T$ such that $C(T)$ is close to the lower bound. 
At present, such a construction of tournaments $T$ giving the best known ^^ ^^ constructive" upper bound of $C(T)$ is obtained by Alon-Spencer~\cite{AS16}.
For a prime $p \equiv 3 \pmod{4}$, the {\it Paley tournament} $T_p$ is the tournament with vertex set $\mathbb{F}_p$, the finite field of $p$ elements, 
and edge set formed by all edges $(x, y)$ such that $x-y$ is a non-zero square of $\mathbb{F}_p$. 
In \cite[Theorem 9.1.1]{AS16}, it was proved that 
\begin{equation}
\label{eq:paley}
C(T_p) \leq \frac{1}{2}\binom{p}{2}+O(p^{\frac{3}{2}}\log p).
\end{equation}
In Section~\ref{sect:Const}, by applying the main theorem proved in the next section, we give some new explicit constructions of regular tournaments $T$ with $n$ vertices such that $C(T)$ is close to the lower bound.

\section{Main theorem}
\label{sect:Main}
In this section, we prove our main theorem.
We first give the definition of regular digraphs and the adjacency matrix of a digraph.
A digraph is said to be {\it $d$-regular} if in-degree and out-degree of each vertex is $d$.
Especially a tournament with $n$ vertices is simply said to be {\it regular} if it is $(n-1)/2$-regular. 
The {\it adjacency matrix} $M_D$ of a digraph $D$ with vertices is the $\{0, 1\}$-square matrix of size $n$ 
whose rows and columns are indexed by the vertices of $D$ and the $(x, y)$-entry is equal to $1$ if and only if $(x, y) \in E(D)$.

The following is our main theorem.
\begin{theorem}
\label{thm:main}
Let $T$ be a regular tournament with $n$ vertices.
Suppose that the adjacency matrix $M_T$ of $T$ has eigenvalues such that $(n-1)/2=\lambda_1, \lambda_2, \cdots, \lambda_{n}$. Let $\lambda(T)=\max_{2\leq i \leq n}|\lambda_i|$.
Then, 
\begin{equation}
C(T)\leq \frac{1}{2}\binom{n}{2}+\lambda(T) \cdot n \log_2 (2n).
\end{equation}
\end{theorem}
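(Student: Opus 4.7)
The plan is to fix an arbitrary bijection $\sigma:V(T)\to\{1,\ldots,n\}$, bound $C(T,\sigma)$ by a quantity independent of $\sigma$, and then take the maximum. Identifying each vertex with its $\sigma$-image, the consistent edges are exactly the edges of $T$ directed from a smaller label to a larger one, so $C(T,\sigma)=\sum_{i<j}\mathbf{1}[(i,j)\in E(T)]$. Following the structural scheme of Alon--Spencer's proof of \eqref{eq:paley} in \cite[Lemma~9.1.2]{AS16}, I would organize these pairs by a dyadic decomposition of $\{1,\ldots,n\}$, but replace the Paley-specific character-sum estimate of that argument with the digraph expander-mixing lemma of Vu~\cite{V08} applied to the regular tournament $T$; this substitution is the whole point of generalising beyond Paley tournaments.

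Concretely, first I would build a rooted binary tree $\mathcal{B}$ whose leaves are $1,2,\ldots,n$ in order, so that each internal node $v$ is associated with an interval $I_v\subseteq\{1,\ldots,n\}$ split into its two halves $L_v$ (smaller indices) and $R_v$ (larger indices) of sizes differing by at most one; such a tree has depth at most $\lceil\log_2 n\rceil$. Because every pair $(i,j)$ with $i<j$ has a unique lowest common ancestor in $\mathcal{B}$, one obtains the two identities
\begin{equation}
C(T,\sigma)=\sum_{v}e_T(L_v,R_v)\quad\text{and}\quad\sum_{v}|L_v||R_v|=\binom{n}{2},
\end{equation}
where the sums run over the internal nodes $v$ of $\mathcal{B}$ and $e_T(A,B)$ denotes the number of edges of $T$ directed from $A$ to $B$.

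Next I would apply Vu's expander-mixing lemma to each term: since $T$ is an $((n-1)/2)$-regular digraph with second-largest eigenvalue modulus $\lambda(T)$, it supplies an inequality of the form
\begin{equation}
\left|e_T(L_v,R_v)-\frac{n-1}{2n}|L_v||R_v|\right|\le\lambda(T)\sqrt{|L_v||R_v|}.
\end{equation}
Summing the main term over all $v$ gives at most $\tfrac{1}{2}\binom{n}{2}$. For the error term I would argue level by level: at each depth $k$ of $\mathcal{B}$ the intervals $I_v$ partition $\{1,\ldots,n\}$, so by AM--GM
\begin{equation}
\sum_{v\text{ at depth }k}\sqrt{|L_v||R_v|}\le\tfrac{1}{2}\sum_{v\text{ at depth }k}|I_v|\le\tfrac{n}{2},
\end{equation}
and summing over the at most $\lceil\log_2 n\rceil\le\log_2(2n)$ depths of internal nodes and multiplying by $\lambda(T)$ yields the claimed bound (with a factor $\tfrac{1}{2}$ to spare).

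The main obstacle I anticipate is calibrating Vu's digraph expander-mixing lemma against the quantity $\lambda(T)$ as defined in the theorem. Since $M_T$ is not symmetric, its non-Perron eigenvalues can be complex, and a priori Vu's inequality is phrased in terms of some spectral norm or singular value rather than $\max_{i\ge 2}|\lambda_i|$. One therefore has to check carefully, using the relation $M_T+M_T^{\top}=J-I$ available for regular tournaments, that Vu's hypothesis unpacks so that the mixing constant is indeed bounded by $\lambda(T)$. Once this calibration is fixed, the remainder of the argument is routine dyadic bookkeeping, and the estimate above holds uniformly in $\sigma$, so taking the maximum completes the proof.
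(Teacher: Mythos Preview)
Your proposal is correct and follows essentially the same approach as the paper: a dyadic decomposition of the ranking combined with Vu's expander-mixing lemma, with the normality of regular tournaments (via $M_T^{\top}=J_n-I_n-M_T$, exactly as the paper records) supplying precisely the calibration you flagged as the main obstacle. The only cosmetic difference is that the paper bounds $C(T,\sigma)-C(T,\sigma')$ via the two-sided estimate $|e(A,B)-e(B,A)|\le 2\lambda(T)\sqrt{|A||B|}$ and then invokes $C(T,\sigma)+C(T,\sigma')=\binom{n}{2}$, whereas you apply the one-sided mixing bound directly to $C(T,\sigma)$; your route is marginally cleaner and, as you observe, saves a factor of~$2$.
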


\begin{remark}
Theorem~\ref{thm:main} implies that every regular tournament $T$ with $n$ vertices such that $\lambda(T)=o(n/\log n)$ has the quasi-random property.
It should be remarked that Kalyanasundaram-Shapira~\cite{KS13} shows a stronger result; a proof of Lemma 2.3 and the first concluding remark in \cite{KS13} implies that a regular tournament $T$ with $n$ vertices has the quasi-random property if and only if $T$ satisfies that $\lambda(T)=o(n)$.
(In \cite{KS13}, the authors considered the eigenvalues of the $\{0, \pm 1\}$-matrix $2M_T-J_n+I_n$, but these eigenvalues can be directly computed from ones of $M_T$.) 

On the other hand, Theorem~\ref{thm:main} not only gives a spectral condition for the quasi-random property, but also implies that estimating eigenvalues of $M_T$ provides better upper bounds of $C(T)$ than the bound (\ref{eq:quasi}).
Thus, considering (\ref{eq:stongquasi}), Theorem~\ref{thm:main} provides a spectral condition for a property, which random tournaments a.a.s. satisfy, stronger than the quasi-random property;
for example, if $T$ satisfies $\lambda(T)=o(n/\log n)$, then Theorem~\ref{thm:main} implies that $C(T) \leq \binom{n}{2}/2+o(n^2)$, which immediately implies the quasi-random property.
\end{remark}

In the proof of Theorem~\ref{thm:main}, we use the {\it expander-mixing lemma} for normal regular digraphs proved by Vu~\cite{V08}.
A digraph $D$ is said to be {\it normal} if $M_D$ and its transpose $M_D^t$ are commutative.
In other word, $D$ is normal if $|N^+(x,y)|=|N^-(x,y)|$ for any two distinct vertices $x$ and $y$ where $N^+(x, y)$ (resp. $N^-(x, y)$) is the set of vertices $z$ 
such that $(x, z) , (y, z) \in E(D)$ (resp. $(z, x) , (z, y) \in E(D)$).

Now we are ready to introduce the expander-mixing lemma for normal regular digraphs.

\begin{lemma}[Expander-mixing lemma, \cite{V08}]
\label{lem:exp} 
Let $D$ be a normal $d$-regular digraph with $n$ vertices and $\lambda(D)=\max_{2\leq i \leq n}|\lambda_i|$. 
For two disjoint subsets $A, B \subset V(D)$, let
\[
e(A, B):=\bigl|\{(a, b)\in E(D) \mid a \in A, \; b \in B\} \bigr|.
\]
Then for every pair of two disjoint subsets $A, B \subset V(D)$, it holds that
\begin{align}
\Bigl|e(A, B)-\frac{d}{n} \cdot |A|\cdot |B|  \Bigr| \leq \lambda(D)\sqrt{|A|\cdot |B|}.
\end{align} 
\end{lemma}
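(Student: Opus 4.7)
The plan is to mimic the classical expander-mixing-lemma proof for undirected graphs, replacing the spectral theorem for symmetric matrices with the spectral theorem for normal matrices. Let $\mathbf{1}_A, \mathbf{1}_B \in \mathbb{R}^n$ be the indicator vectors of $A$ and $B$, and $\mathbf{1}$ the all-ones vector. First I would observe that $e(A,B) = \mathbf{1}_A^{\top} M_D \mathbf{1}_B$ by the definition of $M_D$, and then write the orthogonal decomposition $\mathbf{1}_A = \tfrac{|A|}{n}\mathbf{1} + f_A$ and $\mathbf{1}_B = \tfrac{|B|}{n}\mathbf{1} + f_B$, where $f_A, f_B \perp \mathbf{1}$. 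Since $D$ is $d$-regular, $\mathbf{1}$ is an eigenvector of $M_D$ and of $M_D^{\top}$ with eigenvalue $d$, which kills the two cross terms in the expansion and gives
\begin{equation*}
e(A,B) \;=\; \frac{d}{n}|A||B| \;+\; \langle f_A, M_D f_B\rangle.
\end{equation*}
So the lemma reduces to showing $|\langle f_A, M_D f_B\rangle| \le \lambda(D)\sqrt{|A||B|}$.

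To handle this remaining inner product I would invoke the normality hypothesis: because $M_D$ commutes with $M_D^{\top}$, the complex spectral theorem supplies an orthonormal basis $v_1 = \mathbf{1}/\sqrt{n}, v_2, \dots, v_n$ of $\mathbb{C}^n$ consisting of eigenvectors of $M_D$ with eigenvalues $d = \lambda_1, \lambda_2, \dots, \lambda_n$. Expanding $f_A = \sum_{i \geq 2} \alpha_i v_i$ and $f_B = \sum_{i \geq 2} \beta_i v_i$ (the $i=1$ coefficient is zero by construction), one gets $\langle f_A, M_D f_B\rangle = \sum_{i \geq 2} \overline{\alpha_i}\,\beta_i\,\lambda_i$, and the triangle inequality followed by Cauchy--Schwarz yields
\begin{equation*}
|\langle f_A, M_D f_B\rangle| \;\leq\; \lambda(D)\sum_{i\geq 2}|\alpha_i||\beta_i| \;\leq\; \lambda(D)\,\|f_A\|\,\|f_B\|.
\end{equation*}
The final estimate $\|f_A\|^2 = \|\mathbf{1}_A\|^2 - |A|^2/n = |A|(1-|A|/n) \leq |A|$ (and similarly for $B$) closes the bound.

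I expect the only conceptual subtlety to be the appeal to normality. A general square matrix need not admit an orthonormal eigenbasis, so the spectral expansion used above can fail; normality is precisely what is needed to guarantee the unitary diagonalization even though $M_D$ is not symmetric. Once this is in hand, the rest of the argument is a routine bilinear-form computation and poses no real obstacle; the disjointness of $A$ and $B$ is not used in the linear algebra (it only makes the quantity $e(A,B)$ combinatorially natural).
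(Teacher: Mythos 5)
Your proof is correct: the decomposition $\mathbf{1}_A=\tfrac{|A|}{n}\mathbf{1}+f_A$, the vanishing of the cross terms by $d$-regularity, and the use of normality to obtain a unitary (complex) eigenbasis so that Cauchy--Schwarz applies are exactly the standard argument, and this is essentially the proof given in the cited source \cite{V08}. The paper itself states this lemma without proof, simply importing it from \cite{V08}, so there is nothing internal to compare against; your write-up would serve as a self-contained justification.
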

From this lemma, we can easily obtain the following corollary. 
\begin{corollary}
\label{cor:exp}
Let $D$ be a normal $d$-regular digraph with $n$ vertices. 
Then for every pair of two disjoint subsets $A, B \subset V(D)$, 
\begin{align}
|e(A, B)-e(B, A)| \leq 2\lambda(D)\sqrt{|A|\cdot |B|}.
\end{align}
\end{corollary}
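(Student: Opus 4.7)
The plan is to apply Lemma~\ref{lem:exp} twice, once to the ordered pair $(A,B)$ and once to the reversed ordered pair $(B,A)$, and then combine the two bounds via the triangle inequality. This works because in a digraph the quantities $e(A,B)$ and $e(B,A)$ count disjoint sets of edges (those going from $A$ to $B$, and those going from $B$ to $A$, respectively), but Lemma~\ref{lem:exp} estimates each of them individually against the same ``expected'' value $\frac{d}{n}|A|\cdot|B|$.

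Concretely, I would first invoke Lemma~\ref{lem:exp} with the disjoint sets $A$ and $B$ to obtain
\[
\Bigl|e(A,B) - \tfrac{d}{n}|A|\cdot|B|\Bigr| \leq \lambda(D)\sqrt{|A|\cdot|B|}.
\]
Then I would apply the same lemma to the same pair of disjoint sets but in reversed order, which is permitted since the statement is phrased for ordered pairs of disjoint subsets, yielding
\[
\Bigl|e(B,A) - \tfrac{d}{n}|B|\cdot|A|\Bigr| \leq \lambda(D)\sqrt{|B|\cdot|A|}.
\]
Adding these two inequalities via the triangle inequality
\[
|e(A,B)-e(B,A)| \leq \Bigl|e(A,B) - \tfrac{d}{n}|A|\cdot|B|\Bigr| + \Bigl|e(B,A) - \tfrac{d}{n}|A|\cdot|B|\Bigr|
\]
gives the desired bound $2\lambda(D)\sqrt{|A|\cdot|B|}$.

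There is essentially no obstacle here: the corollary is a direct consequence of the expander-mixing lemma, and the only conceptual point to verify is that Lemma~\ref{lem:exp} applies to both orderings of the pair since it is formulated for ordered pairs in a digraph (so the ``expected'' edge count $\frac{d}{n}|A|\cdot|B|$ is symmetric in $A$ and $B$, while the actual edge counts $e(A,B)$ and $e(B,A)$ need not be). No further use of the normality hypothesis or of the structure of the eigenvalues is needed beyond what is already packaged into Lemma~\ref{lem:exp}.
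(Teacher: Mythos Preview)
Your proposal is correct and follows exactly the same argument as the paper: apply Lemma~\ref{lem:exp} to both orderings of the disjoint pair and combine via the triangle inequality, subtracting the common term $\tfrac{d}{n}|A|\cdot|B|$. There is nothing to add or change.
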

\begin{proof}
From the triangle inequality, we see that 
\begin{align*}
|e(A, B)-e(B, A)|&=\Bigl| \Bigl(e(A, B)-\frac{d}{n} \cdot |A|\cdot |B|\Bigr)-\Bigl(e(B, A)-\frac{d}{n} \cdot |B|\cdot |A|\Bigr) \Bigr|\\
&\leq \Bigl|e(A, B)-\frac{d}{n} \cdot |A|\cdot |B| \Bigr|+\Bigl|e(B, A)-\frac{d}{n} \cdot |B|\cdot |A|\Bigr|.
\end{align*}
Thus, by Lemma~\ref{lem:exp}, we get the corollary.
\end{proof}

By Corollary~\ref{cor:exp}, we get the following lemma.
\begin{lemma}
\label{cor:exp2}
Let $T$ be a regular tournament with $n$ vertices and let $\sigma$ be a bijection from $V(T)$ to $\{1, 2, \ldots, n\}$. 
Then
\begin{equation}
C(T, \sigma)-C(T, \sigma') \leq  2\lambda(T) \cdot n \log_2 (2n).
\end{equation}
\end{lemma}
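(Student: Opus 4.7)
The plan is to express $C(T,\sigma)-C(T,\sigma')$ as a signed sum over ordered pairs of vertices, and then control it by recursively decomposing $\{1,\ldots,n\}$ dyadically into consecutive intervals and applying Corollary~\ref{cor:exp} at each scale. First I would observe that every regular tournament $T$ is normal: from $M_T+M_T^t=J_n-I_n$ together with the fact that $M_T$ commutes with $J_n$ by regularity of its row and column sums, it follows that $M_T$ commutes with $M_T^t$, so Corollary~\ref{cor:exp} is available for $T$.

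Setting $v_i:=\sigma^{-1}(i)$ for $1\le i\le n$, I would rewrite
\[
C(T,\sigma)-C(T,\sigma')=\sum_{1\le i<j\le n}\bigl(\mathbf{1}[(v_i,v_j)\in E(T)]-\mathbf{1}[(v_j,v_i)\in E(T)]\bigr).
\]
For each interval $I=\{a,a+1,\ldots,b\}\subseteq\{1,\ldots,n\}$, I would set $V(I):=\{v_a,\ldots,v_b\}$ and let $S(I)$ denote the analogous signed sum restricted to pairs $i<j$ in $I$; the quantity of interest is then $S(\{1,\ldots,n\})$. When $|I|\ge 2$, splitting $I$ into its left half $I_1$ (size $\lceil|I|/2\rceil$) and right half $I_2$ (size $\lfloor|I|/2\rfloor$) gives
\[
S(I)=S(I_1)+S(I_2)+\bigl(e(V(I_1),V(I_2))-e(V(I_2),V(I_1))\bigr),
\]
and Corollary~\ref{cor:exp} bounds the absolute value of the cross-term by $2\lambda(T)\sqrt{|I_1|\cdot|I_2|}\le\lambda(T)\cdot|I|$, the last step being AM--GM.

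Unrolling this recursion to depth at most $\lceil\log_2 n\rceil$, the intervals that get split at any given level are pairwise disjoint subintervals of $\{1,\ldots,n\}$, so the sum of the cross-term bounds at that level is at most $\lambda(T)\,n$. Iterating and using the triangle inequality yields
\[
|C(T,\sigma)-C(T,\sigma')|\le\lambda(T)\cdot n\lceil\log_2 n\rceil\le\lambda(T)\cdot n\log_2(2n),
\]
which is even stronger than the claimed one-sided bound $2\lambda(T)\cdot n\log_2(2n)$. The main point requiring care is the bookkeeping for the dyadic decomposition; once AM--GM is applied at each split, the contributions at every level cleanly aggregate to $\lambda(T)n$, and the logarithmic factor falls out from the recursion depth.
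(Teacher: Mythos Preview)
Your proof is correct and follows essentially the same dyadic-decomposition argument as the paper: establish normality of regular tournaments, recursively split the $\sigma$-ordering into consecutive halves, and bound the cross terms at each level via Corollary~\ref{cor:exp}. The only difference is that you invoke AM--GM to get $\sqrt{|I_1|\cdot|I_2|}\le|I|/2$, which is slightly sharper than the paper's cruder bound of each piece by the next power of two and accounts for your saving a factor of $2$ in the final constant.
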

\begin{proof}[Proof of Lemma~\ref{cor:exp2}]
The lemma follows by combining Corollary~\ref{cor:exp} and the argument in \cite[pp.150-151]{AS16} to prove the bound (\ref{eq:paley}) for Paley tournaments. 
It should be noted (see also \cite{BG72}) that every regular tournament $T$ with $n$ vertices is normal since it holds that $M_T^t=J_n-I_n-M_T$, where $I_n$ and $J_n$ are the identity matrix and the all-one matrix of order $n$, respectively.

\par \par Fix a bijection $\sigma$. Let $r$ be the smallest integer such that $n \leq 2^r$.
Let $n=a_1+a_2$, where $a_1$ and $a_2$ are positive integers with $a_1, a_2 \leq 2^{r-1}$. 
Consider a partition of $V(T)$, say $A_1$ and $A_2$, such that $A_1$ is the set of ^^ ^^ highly ranked" $a_1$ vertices in $\sigma$ and $A_2$ is the remaining $a_2$ vertices.
It follows from Corollary~\ref{cor:exp} that 
\begin{align}
\label{eq2:thm2}
e(A_1, A_2)-e(A_2, A_1) \leq 2\lambda(T)\sqrt{a_1 a_2} \leq 2\lambda(T) \cdot 2^{r-1}.
\end{align}
Next, let $a_1=a_{11}+a_{12}$, where $a_{11}$ and $a_{12}$ are positive integers with $a_{11}, a_{12} \leq 2^{r-2}$,
and similarly for $a_2=a_{21}+a_{22}$.
As above, divide $A_1$ into two subsets, say $A_{11}$ and $A_{12}$, where $A_{11}$ is the set of ^^ ^^ highly ranked" $a_{11}$ vertices of $A_1$ in $\sigma$
and $A_{12}$ is the remaining $a_{12}$ vertices of $A_1$.
For $a_{21}$ and $a_{22}$, two subsets $A_{21}$ and $A_{22}$ of $A_2$ are defined in the same way as $A_{11}, A_{12}$.
It then follows from Corollary~\ref{cor:exp} that
\begin{align*}
&e(A_{11}, A_{12})-e(A_{12}, A_{11}) + e(A_{21}, A_{22})-e(A_{22}, A_{21}) \\
&\leq 2\lambda(T) \sqrt{a_{11} a_{12}}+2\lambda(T) \sqrt{a_{21} a_{22}} \\
&\leq 2 \cdot 2\lambda(T) \cdot 2^{r-2}.
\end{align*}
Then iterate such estimation from the first to the $r$-th step. 
In the $i$-th step, $V(T)$ is partitioned into $2^i$ subsets, say $A_{\boldsymbol{\varepsilon}1}$ and $A_{\boldsymbol{\varepsilon}2}$ ($\boldsymbol{\varepsilon} \in \{1, 2\}^{i}$), such that each $A_{\boldsymbol{\varepsilon}j}$ ($j=1, 2$) contains at most $2^{r-i}$ vertices which are consecutive in $\sigma$.  
It follows from Corollary~\ref{cor:exp} that
\begin{align}
\label{eq3:thm2}
\sum_{\boldsymbol{\varepsilon} \in \{1, 2\}^{i-1}}\{e(A_{\boldsymbol{\varepsilon} 1}, A_{\boldsymbol{\varepsilon}2})-e(A_{\boldsymbol{\varepsilon}2}, A_{\boldsymbol{\varepsilon}1})\} 
\leq 2^{i-1} &\cdot 2\lambda(T) \cdot 2^{r-i}=2\lambda(T) \cdot2^{r-1}.
\end{align}

On the other hand, it turns out from the construction of partitions that 
\begin{align}
\label{eq4:thm2}
\sum_{1 \leq i \leq r}\sum_{\boldsymbol{\varepsilon}\in \{1, 2\}^{i-1}}\{e(A_{\boldsymbol{\varepsilon} 1}, A_{\boldsymbol{\varepsilon}2})-e(A_{\boldsymbol{\varepsilon}2}, A_{\boldsymbol{\varepsilon}1})\}
=C(T, \sigma)-C(T, \sigma').
\end{align}
Thus by combining (\ref{eq3:thm2}) and (\ref{eq4:thm2}), it follows that
\[
C(T, \sigma)-C(T, \sigma') \leq r \cdot 2\lambda(T) \cdot 2^{r-1} \leq 2\lambda(T) \cdot n \log_2 (2n).
\] 
\end{proof}

\begin{proof}[Proof of Theorem~\ref{thm:main}]
The theorem is a direct consequence of the equality (\ref{eq:plus}) and Lemma~\ref{cor:exp2}.
\end{proof}

\begin{remark}
It should be noted that for every regular tournament $T$ with $n$ vertices, $\lambda(T) \cdot n \log_2 (2n)$ cannot be less than $\sqrt{n^{3}+n} \log_2 (2n)/2$.
In fact, for every such tournament $T$, it holds that
\begin{equation}
\label{eq:lambda}
    \lambda(T) \geq \frac{\sqrt{n+1}}{2}.
\end{equation}
Indeed, for every strongly-connected normal $d$-regular digraph $D$ with $n$ vertices, it holds that
\[
nd = E(D)= Tr(M_DM_D^t) =\sum_{i=1}^n|\lambda_i|^2 \leq d^2+(n-1)\lambda(D)^2,
\]
which follows from the hand shaking lemma and the Perron-Frobenius theorem (see e.g. \cite{LW01}).
The idea of the above inequality can be found in \cite[p.217]{KS06}.
Also note that every regular tournament $T$ is strongly connected, which follows from the Perron-Frobenius theorem and facts that $T$ is normal and every eigenvalue of $M_T$ corresponding to eigenvectors distinct to the all-one vector has the real part equal to $-1/2$ (see also \cite{BG68}).
\end{remark}

\section{Examples of quasi-random regular tournaments}
\label{sect:Const}
In this section, we give some examples of regular tournaments $T$ with $n$ vertices and $\lambda(T)=o(n/\log n)$. 
As will be shown below, we can construct such tournaments for almost all positive integers $n$.

\par First we consider the following tournaments constructed from finite fields which are variants of cyclotomic tournaments (see e.g. \cite{S16} and reference therein).
Let $m$ be a positive even integer and $p\equiv m+1 \pmod {2m}$ be a prime.
Note that there exist infinitely many such primes by the Dirichlet's theorem on arithmetic progressions and the fact that $m+1$ and $2m$ are coprime when $m$ is even.   
Recall that $\mathbb{F}_p$ is the finite field of order $p$. 
Let $g$ be a primitive element of $\mathbb{F}_p$.
For even $m$, the multiplicative group of $\mathbb{F}_p$, which is denoted by $\mathbb{F}_p^*$, is divided into $m$ cosets $S_0, S_1, \ldots, S_{m-1}$ where  
$S_i:=\{g^t \mid t \equiv i \pmod{m} \}$ for each $0 \leq i \leq m-1$.
Note that $S_j = -S_i$ if $j \equiv -i \pmod{m}$.
 
\begin{definition}
Let $\boldsymbol{i}=(i_1, i_2, \ldots, i_{m/2}) \in \{0, 1, \ldots, m-1\}^{m/2}$ such that $S_{\boldsymbol{i}}=S_{i_1} \cup \cdots \cup S_{i_{m/2}}$ and $\mathbb{F}_p^* \setminus S=-S$.
Then the tournament $T_p^m(S_{\boldsymbol{i}})$ is defined as follows:
\begin{equation}
\begin{split}
&V(T_p^m(S_{\boldsymbol{i}}))=\mathbb{F}_p, \\
&E(T_p^m(S_{\boldsymbol{i}}))=\{(x, y)\in \mathbb{F}_p^2 \mid x-y \in S_{\boldsymbol{i}} \}.
\end{split}
\end{equation}
\end{definition}

This is a direct generalization of Paley tournament since $T_p^m(S_{\boldsymbol{i}})$ is exactly $T_p$ in the case of $m=2$.
Moreover from the definition, it is not so hard to see that $T_p^m(S_{\boldsymbol{i}})$ is a regular tournament with $p$ vertices.
\par Now we obtain the following corollary.

\begin{corollary}
\label{cor:Tpm}
\begin{equation}
C\bigl(T_p^m(S_{\boldsymbol{i}})\bigr) \leq \frac{1}{2}\binom{p}{2}+O(p^{\frac{3}{2}} \log p).
\end{equation}
\end{corollary}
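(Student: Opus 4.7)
The plan is to apply Theorem~\ref{thm:main} and reduce the task to showing that $\lambda\bigl(T_p^m(S_{\boldsymbol{i}})\bigr) = O(\sqrt{p})$, with the implicit constant depending only on the fixed even integer $m$. Once this spectral bound is in hand, plugging into Theorem~\ref{thm:main} gives
\[
C\bigl(T_p^m(S_{\boldsymbol{i}})\bigr) \leq \tfrac{1}{2}\binom{p}{2} + O(\sqrt{p}) \cdot p \log_2(2p) = \tfrac{1}{2}\binom{p}{2} + O(p^{3/2}\log p),
\]
which is exactly the desired conclusion.

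The main work is the eigenvalue estimate. I would observe that $T_p^m(S_{\boldsymbol{i}})$ is the Cayley digraph on the additive group $\mathbb{F}_p$ with connection set $S_{\boldsymbol{i}} \subset \mathbb{F}_p^{*}$. Since $\mathbb{F}_p$ is abelian, the eigenvalues of its adjacency matrix are indexed by the additive characters $\chi_a(x) = e^{2\pi i a x/p}$, $a \in \mathbb{F}_p$, and are given by
\[
\lambda_a = \sum_{s \in S_{\boldsymbol{i}}} \chi_a(s).
\]
For $a = 0$ we recover the trivial eigenvalue $|S_{\boldsymbol{i}}| = (p-1)/2$. For $a \neq 0$, I would use that each coset $S_i$ is a union of $|S_0| = (p-1)/m$ elements of $\mathbb{F}_p^{*}$, and apply multiplicative character orthogonality to express the indicator of $S_i$ as
\[
\mathbf{1}_{S_i}(x) = \frac{1}{m}\sum_{\psi^m = \mathbf{1}} \psi(g)^{-i}\,\psi(x), \qquad x \in \mathbb{F}_p^{*},
\]
where the sum runs over the $m$ multiplicative characters of $\mathbb{F}_p^{*}$ of order dividing $m$.

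The key step is then to substitute this expansion and recognize the inner sum $\sum_{x \in \mathbb{F}_p^{*}} \psi(x)\chi_a(x)$ as a classical Gauss sum, which has absolute value $\sqrt{p}$ when $\psi$ is non-trivial and equals $-1$ when $\psi$ is trivial. Summing over the $m/2$ cosets that compose $S_{\boldsymbol{i}}$, the triangle inequality yields
\[
|\lambda_a| \leq \frac{m}{2} \cdot \frac{1}{m}\bigl(1 + (m-1)\sqrt{p}\bigr) \leq \tfrac{m-1}{2}\sqrt{p} + O(1),
\]
so that $\lambda\bigl(T_p^m(S_{\boldsymbol{i}})\bigr) = O(\sqrt{p})$ since $m$ is fixed. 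I expect this Gauss-sum estimate to be the main obstacle, though it is standard; the only subtlety is keeping the dependence on $m$ explicit and verifying that the conditions on $p \equiv m+1 \pmod{2m}$ and $m$ even (which force $-1 \notin S_0$ and ensure the construction is a tournament rather than a graph) are not needed in the bound itself but only for $T_p^m(S_{\boldsymbol{i}})$ to be well-defined as a regular tournament. Combining the eigenvalue bound with Theorem~\ref{thm:main} then finishes the proof.
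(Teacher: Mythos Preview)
Your proposal is correct and follows essentially the same route as the paper: reduce to an eigenvalue bound via Theorem~\ref{thm:main} (the paper phrases this as Lemma~\ref{cor:exp2} plus~(\ref{eq:plus}), which is the same thing), then bound the nontrivial eigenvalues of the Cayley digraph by additive character sums over $S_{\boldsymbol{i}}$. The only cosmetic difference is that the paper rewrites $\sum_{s\in S_i}\psi(s)=\frac{1}{m}\sum_{x\in\mathbb{F}_p^*}\psi(g^i x^m)$ and then quotes the Weil-type bound $\bigl|\sum_{x\in\mathbb{F}_p}\psi(ax^m)\bigr|\le (m-1)\sqrt{p}$ from Schmidt, whereas you unpack that same estimate via multiplicative characters and Gauss sums; both arrive at $\lambda\bigl(T_p^m(S_{\boldsymbol{i}})\bigr)\le\frac{(m-1)\sqrt{p}+1}{2}$.
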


Corollary~\ref{cor:Tpm} is proved by combining Lemma~\ref{cor:exp2} and the following evaluation of $\lambda(T_p^m(S_{\boldsymbol{i}}))$.

\begin{lemma}
\label{lem:eigen}
\begin{align}
\lambda \bigl(T_p^m(S_{\boldsymbol{i}})\bigr) \leq \frac{m\sqrt{p}}{2}.
\end{align}
\end{lemma}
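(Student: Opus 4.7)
The plan is to exploit that $T_p^m(S_{\boldsymbol{i}})$ is a Cayley digraph on the additive group $(\mathbb{F}_p,+)$ with connection set $S_{\boldsymbol{i}}$. Since the group is abelian, its adjacency matrix is simultaneously diagonalised by the $p$ additive characters $\chi_a(x)=e^{2\pi i ax/p}$ ($a \in \mathbb{F}_p$), and the eigenvalue attached to $\chi_a$ is the character sum
\[
\lambda_a \;=\; \sum_{s \in S_{\boldsymbol{i}}} \chi_a(s).
\]
The Perron eigenvalue $(p-1)/2$ comes from $a=0$, so the task reduces to a uniform bound on $|\lambda_a|$ for $a\neq 0$, and I would obtain this by rewriting $\lambda_a$ in terms of Gauss sums.

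First I would fix a multiplicative character $\psi$ of $\mathbb{F}_p^{\ast}$ of order exactly $m$ (which exists since $m \mid p-1$ by the hypothesis $p\equiv m+1\pmod{2m}$), normalised by $\psi(g)=e^{2\pi i/m}$. Orthogonality among the $\psi^k$ gives the coset-indicator identity
\[
\mathbf{1}_{S_i}(x) \;=\; \frac{1}{m}\sum_{k=0}^{m-1}\psi^{-k}(g^{i})\,\psi^k(x), \qquad x\in\mathbb{F}_p^{\ast}.
\]
Substituting this into $\lambda_a=\sum_{j=1}^{m/2}\sum_{x\in\mathbb{F}_p^{\ast}}\chi_a(x)\,\mathbf{1}_{S_{i_j}}(x)$ and swapping the order of summation yields
\[
\lambda_a \;=\; \frac{1}{m}\sum_{k=0}^{m-1}\Biggl(\sum_{j=1}^{m/2}\psi^{-k}(g^{i_j})\Biggr)\,G_k(a), \qquad G_k(a)\;:=\;\sum_{x\in\mathbb{F}_p^{\ast}}\chi_a(x)\,\psi^k(x).
\]
The coefficient in parentheses is a sum of $m/2$ roots of unity, hence of modulus at most $m/2$.

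The last step is to invoke the classical evaluation of Gauss sums. For $k=0$ one has $G_0(a)=\sum_{x\neq 0}\chi_a(x)=-1$ since $a\neq 0$; for $1 \leq k \leq m-1$ both $\chi_a$ and $\psi^k$ are nontrivial, so $|G_k(a)|=\sqrt{p}$. Combining these bounds gives
\[
|\lambda_a| \;\leq\; \frac{1}{m}\biggl(\frac{m}{2}\cdot 1 \;+\; (m-1)\cdot\frac{m}{2}\cdot\sqrt{p}\biggr) \;=\; \frac{1}{2}+\frac{m-1}{2}\sqrt{p} \;\leq\; \frac{m\sqrt{p}}{2},
\]
which is the required bound on $\lambda(T_p^m(S_{\boldsymbol{i}}))$. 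There is no real obstacle here beyond the careful bookkeeping required to convert between additive and multiplicative characters; the substantive input is the standard Gauss-sum modulus, and the Cayley-digraph diagonalisation over an abelian group is routine.
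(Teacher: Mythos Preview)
Your argument is correct and coincides with the paper's proof: both identify the nontrivial eigenvalues as additive-character sums over $S_{\boldsymbol{i}}$ and arrive at the identical intermediate bound $\bigl((m-1)\sqrt{p}+1\bigr)/2$. The only difference is packaging---the paper rewrites each coset sum as $\tfrac{1}{m}\sum_{x\in\mathbb{F}_p^\ast}\psi(g^{i}x^m)$ and quotes the Weil bound $\bigl|\sum_{x\in\mathbb{F}_p}\psi(ax^m)\bigr|\le (m-1)\sqrt{p}$, whereas you expand the coset indicators via a multiplicative character of order $m$ and invoke the Gauss-sum modulus $|G_k(a)|=\sqrt{p}$ directly, which is exactly how that Weil bound is proved.
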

\begin{proof}
First, by a simple calculation, it can be shown that the set of eigenvalue of $M_{T_p^m(S_{\boldsymbol{i}})}$ is
\[
\Bigl\{ \sum_{s\in S_{\boldsymbol{i}}}\psi(s) \mid \text{$\psi$ is an additive character of $\mathbb{F}_p$} \Bigr\}.
\]
Since $S_{i}=g^{i}S_0$ for each $1 \leq i \leq m-1$, we see that 
\begin{align}
\label{eq:eigen1}
\sum_{s\in S_i}\psi(s)=\sum_{s\in g^iS_0}\psi(s)=\sum_{s\in S_0}\psi(g^i s).
\end{align}
Since $S_0$ is the set of non-zero $m$-th power elements and 
each non-zero $m$-th power residue appears exactly $m$ times in the sequence $(x^m)_{x \in \mathbb{F}_{p}^*}$,  
\begin{align}
\label{eq:eigen2}
\sum_{s\in S_0}\psi(g^i s)=\frac{1}{m}\sum_{x \in \mathbb{F}_p^*}\psi(g^i x^m).
\end{align}
At last, we use the following known estimation (see e.g. \cite[p.44]{S76});
\begin{align}
\label{eq:eigen3}
\Bigl|\sum_{x \in \mathbb{F}_p}\psi(a x^m) \Bigr| \leq (m-1)\sqrt{p},
\end{align}
for any non-trivial additive character $\psi$ and $a \neq 0$. 
By combining (\ref{eq:eigen1}), (\ref{eq:eigen2}) and (\ref{eq:eigen3}), 
\begin{align*}
\lambda \bigl(T_p^m(S_{\boldsymbol{i}})\bigr)  \leq \frac{m}{2}\cdot \frac{1}{m} \cdot \{(m-1)\sqrt{p}+1\}
=\frac{(m-1)\sqrt{p}+1}{2} \leq \frac{m\sqrt{p}}{2}.
\end{align*}
\end{proof}

The second example is doubly regular tournament which has been extensively studied in algebraic combinatorics and related areas (e.g. \cite{RB72}).
\begin{definition}
A tournament $T$ with $n$ vertices is called a {\it doubly regular tournament} if $T$ is a regular tournament such that for any distinct two vertices $x$ and $y$, $N^+(x, y)=N^-(x, y)=(n-3)/4$.
\end{definition}
Let $DRT_n$ denote a doubly regular tournament with $n$ vertices.

\begin{corollary}
\label{cor:dr}
\begin{equation}
C(DRT_n)\leq \frac{1}{2}\binom{n}{2}+O(n^{\frac{3}{2}} \log n).
\end{equation}
\end{corollary}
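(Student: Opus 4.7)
The plan is to show that $\lambda(DRT_n) = O(\sqrt{n})$ and then plug directly into Theorem~\ref{thm:main}. In fact, I expect to find $\lambda(DRT_n) = \sqrt{n+1}/2$, which is the minimum possible value according to the bound (\ref{eq:lambda}); thus doubly regular tournaments are spectrally extremal, which makes the estimate especially clean.

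First, I would exploit the combinatorial definition to compute $M M^t$ explicitly, where $M = M_{DRT_n}$. For distinct vertices $x, y$, the $(x,y)$-entry of $MM^t$ counts common out-neighbours, namely $|N^+(x,y)| = (n-3)/4$ by double regularity; the diagonal entries equal the out-degree $(n-1)/2$. Combining these gives
\begin{equation*}
M M^t \;=\; \frac{n-3}{4}\,J \;+\; \frac{n+1}{4}\,I,
\end{equation*}
where $J$ is the all-ones matrix and $I$ the identity. Note that the double regularity condition $|N^+(x,y)| = |N^-(x,y)|$ also ensures $DRT_n$ is normal (this is already implicit from the fact, noted in the proof of Lemma~\ref{cor:exp2}, that every regular tournament satisfies $M^t = J - I - M$).

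Next, I would diagonalize $MM^t$. The all-ones vector $\mathbf{1}$ is an eigenvector with eigenvalue $\frac{n-3}{4}\cdot n + \frac{n+1}{4} = \left(\frac{n-1}{2}\right)^2$, corresponding to $\lambda_1 = (n-1)/2$. On the orthogonal complement of $\mathbf{1}$, the matrix $J$ vanishes, so $MM^t$ acts as $\frac{n+1}{4} I$. Since $DRT_n$ is normal, $M$ is unitarily diagonalizable and the squared moduli of its eigenvalues are exactly the eigenvalues of $MM^t$. Therefore $|\lambda_i|^2 = (n+1)/4$ for every $i \geq 2$, giving
\begin{equation*}
\lambda(DRT_n) \;=\; \frac{\sqrt{n+1}}{2}.
\end{equation*}

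Finally, I would substitute this value into Theorem~\ref{thm:main}:
\begin{equation*}
C(DRT_n) \;\leq\; \frac{1}{2}\binom{n}{2} + \frac{\sqrt{n+1}}{2}\cdot n\,\log_2(2n) \;=\; \frac{1}{2}\binom{n}{2} + O(n^{3/2}\log n),
\end{equation*}
which is exactly the claimed bound. There is no substantive obstacle; the only thing to double-check is the (purely bookkeeping) identity for $MM^t$, which is where the definition of double regularity is used, and the justification that normality lets one pass from eigenvalues of $MM^t$ to squared moduli of eigenvalues of $M$.
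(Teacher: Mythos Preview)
Your proof is correct and follows essentially the same route as the paper: compute $MM^t=\frac{n+1}{4}I+\frac{n-3}{4}J$, use normality to extract $\lambda(DRT_n)=\sqrt{n+1}/2$ (this is Lemma~\ref{lem:eigen2}), and substitute into Theorem~\ref{thm:main}. The paper's only variation is that it substitutes $M^t=J-I-M$ into the $MM^t$ identity to obtain the quadratic $\theta^2+\theta+\frac{n+1}{4}=0$ for the non-Perron eigenvalues and hence the exact values $\theta=(-1\pm i\sqrt{n})/2$, whereas you read off $|\lambda_i|^2$ directly as the eigenvalues of $MM^t$ on $\mathbf{1}^\perp$; your path is slightly more direct since only the moduli are needed here.
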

Corollary~\ref{cor:dr} is proved by the following well-known evaluation of $\lambda(DRT_n)$ which also shows that the inequality (\ref{eq:lambda}) is tight.
\begin{lemma}[e.g. \cite{dGKPM92}]
\label{lem:eigen2}
\begin{align}
\lambda(DRT_n)=\frac{\sqrt{n+1}}{2}.
\end{align}
\end{lemma}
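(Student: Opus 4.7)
The plan is to directly compute $M_T M_T^t$ for $T=DRT_n$ and read off the modulus of every non-principal eigenvalue, rather than diagonalize $M_T$ itself.

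First I would observe that, for any digraph $D$, the $(x,y)$-entry of $M_D M_D^t$ equals $|N^+(x,y)|$ whenever $x\ne y$, and equals the out-degree of $x$ when $x=y$. Applied to a doubly regular tournament $T$, the defining identities $|N^+(x,y)|=(n-3)/4$ (for $x\ne y$) and out-degree $=(n-1)/2$ yield
\[
M_T M_T^t \;=\; \frac{n-3}{4}\,(J_n-I_n) \;+\; \frac{n-1}{2}\,I_n \;=\; \frac{n-3}{4}\,J_n \;+\; \frac{n+1}{4}\,I_n.
\]

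Next I would exploit the fact, already recorded in the excerpt, that every regular tournament is normal. Consequently $M_T$ admits an orthonormal eigenbasis $v_1,\dots,v_n$ with $v_1=\mathbf{1}/\sqrt{n}$ corresponding to $\lambda_1=(n-1)/2$, and for every $i\ge 2$ the eigenvector $v_i$ is orthogonal to $\mathbf{1}$, so $J_n v_i=0$. Because $M_T$ is normal, $M_T^t v_i = \overline{\lambda_i}\, v_i$, hence $M_T M_T^t v_i = |\lambda_i|^2 v_i$. Plugging $v_i$ into the identity displayed above gives
\[
|\lambda_i|^2 \,v_i \;=\; M_T M_T^t v_i \;=\; \frac{n+1}{4}\, v_i,
\]
so $|\lambda_i|=\sqrt{n+1}/2$ for every $i\ge 2$, and therefore $\lambda(DRT_n)=\sqrt{n+1}/2$.

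There is no serious obstacle here: the argument is a one-shot linear-algebra computation. The only point requiring care is making sure the diagonal of $M_T M_T^t$ is handled correctly (the out-degree $(n-1)/2$ rather than the common-neighbor count $(n-3)/4$), and invoking normality so that eigenvalues of $M_T M_T^t$ are precisely the squared moduli of eigenvalues of $M_T$ rather than something more complicated.
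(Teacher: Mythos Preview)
Your proof is correct and shares the paper's starting point: both compute $M_T M_T^t=\frac{n-3}{4}J_n+\frac{n+1}{4}I_n$ and use normality together with the orthogonality of the non-principal eigenvectors to $\mathbf{1}$. The only difference is in the finish. The paper substitutes $M_T^t=J_n-I_n-M_T$ to obtain the quadratic $M_T^2+M_T+\frac{n+1}{4}I_n-\frac{n+1}{4}J_n=O$, from which every non-principal eigenvalue satisfies $\theta^2+\theta+\frac{n+1}{4}=0$; this yields the explicit values $\theta=(-1\pm i\sqrt{n})/2$, not just their moduli. Your route is slightly more direct for the lemma as stated, invoking the spectral fact that $M_T M_T^t$ has eigenvalues $|\lambda_i|^2$ to read off $|\lambda_i|=\sqrt{n+1}/2$ immediately, at the cost of not pinning down the eigenvalues themselves.
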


\begin{proof}
We give a proof for the reader's convenience.
Let $M=M_{DRT_n}$. Then by the definition, it holds that
\begin{equation}
MM^t=\frac{n+1}{4}I_n+\frac{n-3}{4}J_n.
\end{equation} 
Since $M+M^t=J_n-I_n$, we obtain the following equality.
\begin{equation}
M^2+M+\frac{n+1}{4}I_n-\frac{n+1}{4}J_n=O.
\end{equation} 
Since $DRT_n$ is regular, we see that $(n-1)/2$ is an eigenvalue of $M$ and a corresponding eigenvector is the all-one eigenvector $\boldsymbol{1}$.
Since $DRT_n$ is normal, each eigenvalue $\theta$ except for $(n-1)/2$ has an eigenvector $\boldsymbol{v}$ which is orthogonal to $\boldsymbol{1}$.
Thus, 
\begin{equation}
\Bigl(\theta^2+\theta+\frac{n+1}{4} \Bigr)\boldsymbol{v}=\boldsymbol{0}.
\end{equation} 
Since $\boldsymbol{v}\neq \boldsymbol{0}$, we get 
\begin{equation}
\Bigl(\theta^2+\theta+\frac{n+1}{4} \Bigr)=0,
\end{equation} 
completing the proof. 
\end{proof}

\begin{remark}
We remark that Corollary~\ref{cor:dr} is a generalization of the bound (\ref{eq:paley}) because Paley tournaments are also doubly-regular tournaments.
For other non-isomorphic examples of doubly regular tournaments, see e.g. \cite{IO94} and \cite{S69}. 
As shown in, for example, \cite{HW14} and \cite{RB72}, there are some known constructions of doubly regular tournaments such that the number of vertices is non-prime (and non-prime power). 
Especially, constructions of complex codebooks in \cite{HW14} provide $DRT_n$ for every integer $n$ such that each prime factor $f$ of $n$ is the form of $f \equiv 3 \pmod{4}$. 
\end{remark}
\begin{remark}
By the definition of $DRT_n$, $n$ must be a positive integer of the form $n \equiv 3 \pmod{4}$.
On the other hand, as an analogue of $DRT_n$ for integers $n$ of the form $n\equiv 1 \pmod{4}$, Savchenko~\cite{S16} introduced the notion of a nearly-doubly-regular tournament
$CNDR_n$ with $n$ vertices which is a certain regular tournament with exactly four eigenvalues distinct to $(n-1)/2$ with multiplicity $(n-1)/4$.
According to \cite{S16}, it holds that $\lambda(CNDR_n)=(\sqrt{n}+1)/2$.
Thus if there exists $CNDR_n$ for infinitely many $n \equiv 1\pmod{4}$, then it holds that
\[
C(CNDR_n)\leq \frac{1}{2}\binom{n}{2}+O(n^{\frac{3}{2}}\log n).
\]
It is conjectured in \cite{S16} (see also \cite{S17}) that there exists a $CNDR_n$ for every $n \equiv 1 \pmod{4}$.
Interestingly, Savchenko~\cite{S16} also found examples of $CNDR_p$ for primes $p=5, 13, 29, 53, 173, 229, 293$ and $733$ from the class of $T_p^4(S_{(0, 1)})$ in the first example, and thus Lemma~\ref{lem:eigen} can be improved for these examples.
(It is shown in \cite{S16} that for every prime $p \equiv 5 \pmod{8}$, $T_p^4(S_{(0, 1)})$ has exactly four eigenvalues distinct to $(p-1)/2$ with multiplicity $(p-1)/4$.)
It would be interesting to prove or disprove the existence of infinitely many primes $p \equiv 5 \pmod{8}$ such that the tournament $T_p^4(S_{(0, 1)})$ is in the class of $CNDR_p$.
\end{remark}

The third example is based on a construction of pseudo-random graphs due to Shparlinski~\cite{S08}.
For related facts on eliptic curves, see \cite[Section 2.1]{S08}.
For a prime $p$, let $n \in [p+1-2\sqrt{p}, p+1+2\sqrt{p}]$ be an odd integer.
It is known (e.g. \cite{BS04}, \cite{D41}) that there exists an eliptic curve $E$ over $\mathbb{F}_p$ such that the number of $\mathbb{F}_p$-rational points of $E$ is $n$.
It is also known (e.g. \cite{S95}) that all $\mathbb{F}_p$-rational points of $E$ form an abelian group $G$ of order $n$ under an operation $\oplus$.
Let $0_G$ be the identity of $G$.
For an element $s \in G$ and a subset $S \subset G$, the inverse of $s$ is denoted by $\ominus s$ and let $\ominus S=\{\ominus s \mid s \in S\}$.

\begin{definition}
Let $S \subset G$ be a subset such that $S \cup \ominus S \cup \{0_G \}=G$ and $|S|=(n-1)/2$. 
Then the tournament $T_{p,n}(S)$ is defined as follows.
\begin{equation}
\begin{split}
&V(T_{p,n}(S))=G, \\
&E(T_{p,n}(S))=\{(x, y)\in G^2 \mid x \ominus y \in S \}.
\end{split}
\end{equation}
\end{definition}
By the definition, $T_{p,n}(S)$ is a regular tournament with $n$ vertices.

\begin{corollary}
\label{cor:eliptic}
There exists a subset $S \subset G$ such that
\begin{equation}
C(T_{p,n}(S)) \leq \frac{1}{2}\binom{n}{2}+O(n^{\frac{3}{2}} \log^2 n).
\end{equation}
\end{corollary}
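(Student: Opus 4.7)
The plan is to realize $T_{p,n}(S)$ as a Cayley digraph on the abelian group $G$ with connection set $S$, so that the eigenvalues of $M_{T_{p,n}(S)}$ are precisely the character sums $\sum_{s \in S}\chi(s)$ as $\chi$ ranges over the characters of $G$. The trivial character produces the Perron eigenvalue $(n-1)/2$, so in order to invoke Theorem~\ref{thm:main} it suffices to exhibit an admissible $S$ (one with $S \cup \ominus S \cup \{0_G\}=G$ and $|S|=(n-1)/2$) for which
\[
\Bigl|\sum_{s \in S}\chi(s)\Bigr| = O\bigl(\sqrt{n}\log n\bigr)
\]
for every nontrivial character $\chi$ of $G$.

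First I would recall the construction from Shparlinski~\cite{S08}: starting from a point of large order in $E(\mathbb{F}_p)$, one forms a subset of $G$ from a segment of an elliptic-curve pseudo-random orbit (equivalently from the image of a short interval under the group law). The crucial input is the elliptic-curve analogue of the Bombieri/Kohel--Shparlinski exponential-sum estimate, which bounds character sums of such a set by $O(\sqrt{n}\log n)$. This provides a subset $S_0 \subset G \setminus \{0_G\}$ of size roughly $n/2$ for which every nontrivial character sum has the desired size.

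Next I would adjust $S_0$ to meet the tournament requirement. For each pair $\{s, \ominus s\}$ with $s \neq 0_G$, at least one of the two must be selected into $S$, and for the tournament condition exactly one must be selected. Starting from $S_0$, I would pass to $S := (S_0 \setminus (\ominus S_0)) \cup S_1$, where $S_1$ is obtained by placing into $S$ exactly one of $s, \ominus s$ for each $s \in \ominus S_0 \cap S_0$ (and for each pair missed entirely by $S_0$). A simple averaging or explicit combinatorial choice shows that the resulting character sums change by $O(\sqrt{n}\log n)$ at worst, because $\chi(s)+\chi(\ominus s)=2\,{\rm Re}\,\chi(s)$, which after summation is controlled by the same exponential-sum bound. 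Thus $\lambda(T_{p,n}(S)) = O(\sqrt{n}\log n)$.

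Finally, since every regular tournament is normal (as noted in the proof of Lemma~\ref{cor:exp2}), Theorem~\ref{thm:main} applies and gives
\[
C\bigl(T_{p,n}(S)\bigr) \leq \frac{1}{2}\binom{n}{2} + \lambda(T_{p,n}(S))\cdot n\log_2(2n) = \frac{1}{2}\binom{n}{2}+O\bigl(n^{3/2}\log^2 n\bigr),
\]
which is the claimed bound. The main obstacle is precisely the second step: extracting from Shparlinski's pseudo-random construction a connection set that is simultaneously anti-symmetric enough to define a tournament and still enjoys small character sums against every nontrivial character of $G$. Once the Bombieri-type estimate for character sums over elliptic-curve orbits is in hand, however, the symmetrization cost is of the same order, and the rest of the argument is just an application of the main theorem.
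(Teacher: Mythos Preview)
Your high-level strategy matches the paper's exactly: establish $\lambda(T_{p,n}(S))=O(\sqrt{n}\log n)$ via Shparlinski's work~\cite{S08} and then apply Theorem~\ref{thm:main}. The paper simply records the eigenvalue bound as Lemma~\ref{lem:eigen3}, attributing both the bound and the construction of an admissible anti-symmetric $S$ to \cite[Theorem~1]{S08}, so the corollary follows in one line from Lemma~\ref{cor:exp2}.

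The gap in your version is the symmetrization step. You claim that after modifying a ``good'' $S_0$ pair-by-pair into an anti-symmetric $S$, each nontrivial character sum changes by at most $O(\sqrt{n}\log n)$, invoking $\chi(s)+\chi(\ominus s)=2\,\mathrm{Re}\,\chi(s)$. But that identity controls the contribution of a full pair $\{s,\ominus s\}$; what your modification alters is which \emph{single} element of the pair lies in $S$, and the resulting change in the sum is $\chi(s)-\chi(\ominus s)=2i\,\mathrm{Im}\,\chi(s)$. Summed over the possibly $\Theta(n)$ pairs you flip, this quantity is not controlled by the hypothesis on $S_0$, and an averaging argument would only give, for each fixed $\chi$, \emph{some} good anti-symmetric $S$---not a single $S$ that works for all $n-1$ nontrivial characters simultaneously. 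The resolution, as in the paper, is not to symmetrize at all: the connection set constructed in~\cite{S08} is already of the required anti-symmetric form, and Theorem~1 there bounds its character sums directly. Once you quote that result, your final paragraph applying Theorem~\ref{thm:main} is correct and yields the stated $O(n^{3/2}\log^2 n)$ bound.
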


Corollary~\ref{cor:eliptic} is obtained by Lemma~\ref{cor:exp2} and the following evaluation of $\lambda(T_{p,n}(S))$ which follows from \cite[Theorem 1]{S08}.
\begin{lemma}[\cite{S08}]
There exists a subset $S \subset G$ such that
\label{lem:eigen3}
\begin{align}
\lambda(T_{p,n}(S))=O(\sqrt{n}\log n).
\end{align}
\end{lemma}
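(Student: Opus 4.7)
The plan is to realize $T_{p,n}(S)$ as a Cayley digraph on the abelian group $G$ so that its spectrum is given by character sums, and then to use a probabilistic construction of $S$. Since $G$ is abelian, a routine computation shows that for each character $\chi \in \widehat{G}$ the vector $(\chi(g))_{g \in G}$ is an eigenvector of $M_{T_{p,n}(S)}$ with eigenvalue $\overline{\sum_{s \in S}\chi(s)}$; indeed
\[
\bigl(M_{T_{p,n}(S)}v_\chi\bigr)_x = \sum_{y:\, x \ominus y \in S}\chi(y) = \chi(x)\sum_{s \in S}\overline{\chi(s)}.
\]
The trivial character yields the Perron eigenvalue $(n-1)/2$, so it suffices to exhibit a subset $S$ with $S \cup \ominus S \cup \{0_G\}=G$ and $|S|=(n-1)/2$ such that $|\sum_{s\in S}\chi(s)|=O(\sqrt{n}\log n)$ holds simultaneously for every non-trivial $\chi$.

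Because $n$ is odd, the only element of $G$ satisfying $g = \ominus g$ is $0_G$, so $G \setminus \{0_G\}$ partitions into $(n-1)/2$ disjoint inverse-pairs $\{g, \ominus g\}$. The construction is to let $S$ contain exactly one member of each pair, chosen independently and uniformly at random. Any such $S$ automatically satisfies the required symmetry and cardinality.

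Fix a non-trivial character $\chi$. The sum $\sum_{s \in S}\chi(s)$ decomposes as a sum of $(n-1)/2$ independent complex random variables $X_{\{g,\ominus g\}}$, each taking values in $\{\chi(g), \overline{\chi(g)}\}$ and thus bounded by $1$ in modulus. By orthogonality of characters its expectation equals $\tfrac12\sum_{g\neq 0_G}\chi(g)=-\tfrac12$. Applying Hoeffding's inequality separately to the real and imaginary parts yields
\[
\Pr\!\Bigl[\,\bigl|\textstyle\sum_{s\in S}\chi(s)\bigr|\ge A\sqrt{n\log n}\Bigr]\le 4\,e^{-cA^2\log n}
\]
for an absolute constant $c>0$ and all sufficiently large $A$. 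Union-bounding over the $n-1$ non-trivial characters and taking $A$ large, the probability that some eigenvalue exceeds $A\sqrt{n\log n}$ is at most $4n\cdot n^{-cA^2}<1$, so there exists a realization of $S$ for which $|\sum_{s\in S}\chi(s)|=O(\sqrt{n\log n})$ for every $\chi\ne \mathbf{1}$; this is in fact slightly stronger than the claimed $O(\sqrt{n}\log n)$.

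The principal obstacle is reconciling two competing demands: $S$ must obey the rigid involution constraint $S \cup \ominus S \cup \{0_G\}=G$, yet the concentration step requires enough independent coordinates. The resolution is to randomize at the level of inverse-pairs, which is only available because $n$ is odd and no non-trivial $2$-torsion in $G$ can disrupt the pairing. A secondary technicality is that the summands are complex-valued, but splitting into real and imaginary parts reduces this to ordinary real-valued Hoeffding; no elliptic-curve input beyond the abelian group structure of $G$ and $|G|=n$ enters the argument.
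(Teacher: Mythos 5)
Your argument is internally correct: the Cayley-digraph eigenvalue computation, the pairing of $G\setminus\{0_G\}$ into inverse pairs (valid since $n$ is odd), and the Hoeffding-plus-union-bound step all go through, and they even yield the slightly stronger bound $O(\sqrt{n\log n})$. (In fact the real part of $\sum_{s\in S}\chi(s)$ is deterministic, equal to $-1/2$, so only the imaginary part needs concentration.) However, this is a genuinely different route from the paper's, and the difference matters for how the lemma is used. The paper does not prove the lemma at all; it cites Shparlinski \cite{S08}, whose point is an \emph{explicit, deterministic} choice of $S$ (built from the structure of the group of rational points and estimates for exponential sums along elliptic curves), and the surrounding Section~4 exists precisely to answer the Erd\H{o}s--Moon--Spencer question about \emph{explicit} constructions --- the mere existence of tournaments with $C(T)\leq\binom{n}{2}/2+O(n^{3/2})$ was already known probabilistically via (\ref{eq:stongquasi}). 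Your proof establishes the literal existential statement of Lemma~\ref{lem:eigen3}, and it is more elementary and far more general (it uses nothing about $G$ beyond being abelian of odd order, so it would work just as well for $\mathbb{Z}_n$), but a randomly chosen $S$ cannot feed into Corollary~\ref{cor:eliptic} as a solution to the explicit-construction problem. If you want your argument to serve the paper's purpose, you would need to derandomize the choice of one element from each inverse pair, which is exactly the content of \cite{S08} that your proposal bypasses.
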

For the details of a construction of such a subset $S$, see \cite{S08}.

\begin{remark}
It is worth noting that as shown in \cite{S08}, almost all positive integers are in the interval $[p+1-2\sqrt{p}, p+1+2\sqrt{p}]$ for some prime $p$. 
Indeed, it holds (\cite{S08}) that
\[
\lim_{N \to \infty}\frac{|\{n \leq N \mid \text{$\exists$ prime $p$ s.t. $n$ is odd and $n \in [p+1-2\sqrt{p}, p+1+2\sqrt{p}]$}\}|}{\lceil\frac{N}{2} \rceil}=1.
\]
Thus the third example provides regular tournaments $T$ with $n$ vertices and small $\lambda(T)$ for almost all positive integers $n$. 
\end{remark}

\section{Sh\"{u}tte's problem for tournaments}
\label{sec:Shutte}
At last, in this section, we focus on another random-like property. 
\begin{definition}
\label{def:ec}
Let $k$ be a positive integer. 
A tournament $T$ has the property $S_k$ if for every $A \subset V(T)$ of size $k$, 
there exists a vertex $z \notin A$ directing to all members of $A$. 
\end{definition}
The {\it Sh\"{u}tte's problem} asks the existence of tournaments satisfying this property (see \cite{E63} and \cite{M68}).
As shown by Erd\H{o}s~\cite{E63}, random tournaments a.a.s. satisfy $S_k$ for any $k\geq 1$.
On the other hand, the problem of explicit constructions has been considered in graph theory. 
For example, Graham-Spencer~\cite{GS71} showed that the Paley tournament $T_p$ satisfies $S_k$ if $p>k^22^{2k-2}$ for each $k \geq 1$.
From the digraphs constructed in \cite{AHLNVW15}, we can also construct tournaments satisfying $S_k$ for every $k$ by adding some edges. 
At present, there seems to be almost no explicit constructions of tournaments satisfying both of the quasi-random property and $S_k$ except for Paley tournaments.
The following proposition and Corollary~\ref{cor:Tpm} show that the tournament $T_p^m(S_{\boldsymbol{i}})$ has the quasi-random property and $S_k$.

\begin{proposition}
\label{prop:shutte1}
Let $m$ be an even positive integer. 
Then for every $k \geq 1$, there exists a prime $p_m(k)$ such that for every prime $p>p_m(k)$, the tournament $T_p^m(S_{\boldsymbol{i}})$ has the property $S_k$.
\end{proposition}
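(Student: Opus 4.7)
The plan is to fix an arbitrary $A = \{a_1,\dots,a_k\}\subset \mathbb{F}_p$ of distinct elements and count
$$N(A) := \bigl|\{z \in \mathbb{F}_p : z - a_i \in S_{\boldsymbol{i}}\ \text{for all } i\}\bigr|,$$
showing $N(A) \geq 1$ whenever $p$ exceeds a threshold depending only on $k$ and $m$. Since $0 \notin S_{\boldsymbol{i}}$, any such $z$ automatically lies outside $A$ and directs an edge to every $a_i$ in $T_p^m(S_{\boldsymbol{i}})$, so $N(A)\geq 1$ for every such $A$ is exactly property $S_k$; the threshold then serves as $p_m(k)$.

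To estimate $N(A)$ I would mimic the character-sum argument Graham--Spencer \cite{GS71} use for Paley tournaments. The congruence $p \equiv m+1 \pmod{2m}$ forces $m \mid p-1$, so there exists a multiplicative character $\psi$ of $\mathbb{F}_p^*$ of order exactly $m$ (extended by $\psi(0)=0$); orthogonality yields
$$\mathbf{1}_{S_{\boldsymbol{i}}}(s) = \frac{1}{m}\sum_{t=0}^{m-1}\alpha_t\,\psi^t(s), \qquad \alpha_t := \sum_{\ell=1}^{m/2}\psi(g)^{-t\,i_\ell},$$
with $\alpha_0 = m/2$ and $|\alpha_t| \leq m/2$ in general. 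Substituting into $N(A) = \sum_z \prod_i \mathbf{1}_{S_{\boldsymbol{i}}}(z-a_i)$ and expanding, the tuple $\vec t = \vec 0$ contributes the main term $(p-k)/2^k$, while each other tuple $\vec t = (t_1,\dots,t_k)$ produces an error term of shape $m^{-k}\bigl(\prod_i \alpha_{t_i}\bigr)\sum_z \psi(f_{\vec t}(z))$ with $f_{\vec t}(z) := \prod_i (z - a_i)^{t_i}$.

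The decisive step is to apply Weil's theorem on character sums (the same estimate underlying (\ref{eq:eigen3})) to each $\sum_z \psi(f_{\vec t}(z))$ with $\vec t \neq \vec 0$. Because the $a_i$ are distinct and the exponents $0 \leq t_i < m$ cannot all be zero, at least one multiplicity in $f_{\vec t}$ is non-zero and strictly less than $m$; hence $f_{\vec t}$ is not of the form $c\cdot h(z)^m$, and Weil gives $|\sum_z \psi(f_{\vec t}(z))| \leq (k-1)\sqrt{p}$. Using the crude bound $\sum_{t=0}^{m-1}|\alpha_t| \leq m^2/2$, the total error is at most $m^k(k-1)\sqrt{p}/2^k$, so
$$N(A) \geq \frac{(p-k) - m^k(k-1)\sqrt{p}}{2^k},$$
which is positive once $p$ exceeds a threshold $p_m(k)$ of order $k^2 m^{2k}$.

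The only obstacle worth flagging is the verification that $f_{\vec t}$ fails to be an $m$-th power when $\vec t \neq \vec 0$, which relies on the distinctness of the $a_i$; everything else is standard bookkeeping for the character expansion. Note that the resulting $p_m(k)$ depends only on $m$ and $k$, not on $\boldsymbol{i}$, which is exactly the uniformity the statement asks for.
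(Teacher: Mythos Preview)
Your argument is correct and is precisely the generalization of the Graham--Spencer character-sum method that the paper points to (citing \cite{GS71} and \cite{AC06}) in place of a written-out proof. One minor inaccuracy worth fixing: the estimate (\ref{eq:eigen3}) you cite concerns \emph{additive} characters, whereas the bound you actually need---and correctly apply---is Weil's theorem for \emph{multiplicative} character sums with polynomial argument.
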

Proposition~\ref{prop:shutte1} is proved by a direct generalization of the discussion in \cite{GS71} and \cite{AC06}, so we omit the proof here.
Moreover, it is not so hard to prove that $T_p^m(S_{\boldsymbol{i}})$ has the existentially closed property (see e.g. \cite{B09}).
\par We also note that doubly regular tournaments constructed in \cite{S69} satisfy both of the quasi-random property and $S_2$, which follows from Corollary~\ref{cor:dr} and the corollary in \cite[p.277]{S69}. 

\section*{Acknowledgement}
We would like to thank Masanori Sawa and Yujie Gu for their valuable comments. 
We also greatly appreciate Sergey Savchenko for his helpful remarks.


\end{document}